\documentclass{amsart}
\usepackage[utf8]{inputenc}
\usepackage{amsmath}
\usepackage{amssymb}
\usepackage{caption}
\usepackage{amsthm}
\usepackage[hidelinks]{hyperref}
\usepackage{cleveref}
\usepackage{hyperref}
\usepackage{nicefrac}
\usepackage[inline]{enumitem}
\usepackage{enumitem}
\usepackage[bb = fourier,cal = euler,scr = rsfs]{mathalfa}	
\usepackage{mathabx}

\usepackage{amssymb}
\usepackage{mathrsfs}

\usepackage{array,float}

\usepackage{tikz}
\usetikzlibrary{shapes,arrows}
\usetikzlibrary{fit,positioning}
\usetikzlibrary{patterns,decorations.pathreplacing}
\usepackage{xkeyval}
\usepackage{moreverb}
\usepackage{epic}
\usepgfmodule{shapes,plot,decorations}

\usepackage{booktabs} 
\usepackage[normalem]{ulem}

\usepackage[warn]{mathtext}

\newtheorem{theorem}{Theorem}[section]
\newtheorem{proposition}[theorem]{Proposition}
\newtheorem{corollary}[theorem]{Corollary}

\theoremstyle{remark}
\newtheorem{remark}[theorem]{Remark}

\theoremstyle{remark}

\title{On transversality in flag manifolds and linearity of amalgams}

\author{Sami Douba}
\address{Mathematisches Institut der Universit\"at Bonn, Endenicher Allee 60, 53115 Bonn, Germany}
\email{douba@math.uni-bonn.de}

\author{Konstantinos Tsouvalas}
\address{Max Planck Institute for Mathematics in the Sciences, Inselstraße 22, 04103 Leipzig, Germany}
\email{konstantinos.tsouvalas@mis.mpg.de}

\usepackage{chngcntr}
\usepackage{graphicx} 
\usepackage{float}
\counterwithout{equation}{section}
\counterwithout{theorem}{section}

\begin{document}

\begin{abstract}
We show that the fundamental group of the double of a complete negatively curved locally symmetric manifold along a closed geodesic is linear. More generally, we establish linearity of doubles of torsion-free {\em transverse} subgroups (also known in the literature as {\em regular antipodal} subgroups) of semisimple Lie groups along biproximal maximal cyclic subgroups.
\end{abstract}

\subjclass[2020]{20F67, 22E40}

\maketitle

Given a subgroup $\Delta$ of a group $\Gamma$, we will refer to the amalgam $\Gamma \ast_\Delta \Gamma$ induced by the inclusion into either factor as the {\em double of $\Gamma$ along $\Delta$}. If $\Gamma$ is the fundamental group of a connected manifold $M$ and $\Delta$ the fundamental group of a $\pi_1$-injective properly immersed connected submanifold $N$ of $M$, then the double $\Gamma \ast_\Delta \Gamma$ is nothing but the fundamental group of the {\em double of $M$ along $N$}, where the latter is analogously defined as a graph of spaces. In this note, we prove the following.

\begin{theorem}\label{lss}
The fundamental group of the double of a negatively curved locally symmetric manifold\footnote{All locally symmetric spaces in this note are assumed complete, though not necessarily of finite volume.} $M$ along a primitive closed geodesic in $M$ admits a faithful linear representation over $\mathbb{R}$ whose dimension depends only on that of~$M$.
\end{theorem}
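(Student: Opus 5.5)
The plan is to deduce Theorem~\ref{lss} from the general statement in the abstract: doubles of torsion-free transverse subgroups along biproximal maximal cyclic subgroups are linear. Concretely, $\Gamma=\pi_1(M)$ is a torsion-free discrete subgroup of $G=\mathrm{Isom}(\mathbb{H}^n_K)$, a real rank one simple Lie group ($K=\mathbb{R},\mathbb{C},\mathbb{H},\mathbb{O}$). In rank one every discrete subgroup is transverse with respect to the unique (up to conjugacy) proper parabolic $P$, since the limit set in $\partial\mathbb{H}^n_K=G/P$ consists of pairwise distinct, hence transverse, points. A primitive closed geodesic corresponds to a maximal cyclic subgroup $\Delta=\langle\gamma\rangle$, with $\gamma$ loxodromic and therefore biproximal on $G/P$. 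It remains to prove the general statement for $\Gamma<G$ and to check that the dimension is controlled: the representation produced should live in a group built from $G$ (e.g.\ $G$ itself, or $G\times G$, or $\mathrm{SL}$ of a fixed-dimension module), and since $\dim M$ determines $G$ up to finitely many possibilities, the dimension then depends only on $\dim M$.

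For the general statement I would use a ping-pong / Klein–Maskit combination argument for amalgams. Let $Z$ be the centralizer of $\gamma$ in $G$. Choose $c\in Z$ commuting with $\Delta$ and consider the map $\Gamma\ast_\Delta\Gamma\to G$ that is the identity on the first factor and conjugation by $c$ on the second. Because $\Delta$ is maximal cyclic, hence self-normalizing in a torsion-free transverse group, the limit set $\Lambda_\Gamma$ meets the fixed points $\gamma^\pm$ only as isolated double points. Take $c$ in the split torus through the axis of $\gamma$ (in rank one, a large translation along the axis, possibly combined with a compact element of $Z$), so that $c$ contracts the complement of a small neighbourhood of $\gamma^-$ into a small neighbourhood of $\gamma^+$. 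I would then build disjoint-ish open sets $U_1,U_2\subset G/P$ (or in an appropriate flag manifold) with $(\Gamma\smallsetminus\Delta)U_2\subset U_1$ and $c(\Gamma\smallsetminus\Delta)c^{-1}U_1\subset U_2$, both $\Delta$-invariant, in the style of Maskit's first combination theorem. These would come from a fundamental domain for the $\Delta$-action on $\Lambda_\Gamma\smallsetminus\{\gamma^\pm\}$ together with the convergence/transversality dynamics. Faithfulness then follows from the standard normal-form ping-pong.

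The main obstacle is the ping-pong step: $\Gamma$ may be infinitely generated, or not convex cocompact, and its limit set can accumulate at $\gamma^\pm$ in complicated ways. One therefore needs uniform control showing that $\Gamma\smallsetminus\Delta$ maps a neighbourhood of $\gamma^\pm$ (modulo $\Delta$) away from $\gamma^\pm$. In rank one I would first try to get this from discreteness plus maximality of $\Delta$: a sequence $g_n\in\Gamma\smallsetminus\Delta$, taken in reduced double-coset form $\Delta g_n\Delta$, cannot have $g_n^{\pm}$ converging to $\gamma^\pm$, or else $\gamma$ and $g_n\gamma g_n^{-1}$ would nearly share an axis, contradicting discreteness. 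If the naive identity-on-one-factor map fails, the fallback is to replace $G$ by a higher-dimensional representation, still of dimension bounded in terms of $\dim M$, in which $\gamma$ becomes more strongly proximal, and to run the same argument there.
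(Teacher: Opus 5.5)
Your reduction to rank one (discrete subgroups are transverse, a primitive geodesic gives a maximal cyclic $\Delta=\langle\gamma\rangle$ with $\gamma$ biproximal, and $\dim M$ determines $G$ up to finitely many choices) matches the paper. Your ansatz is also the paper's: identity on one factor, conjugation on the other by an element centralizing $\Delta$. After the Pl\"ucker embedding and a basis adapted to $\gamma^\pm$, the paper's element is $a_t=\mathrm{diag}(t,I_{n-2},t^{-1})$; for $\mathbb{H}^2$, working in $\mathsf{SL}_2(\mathbb{R})$, this is literally a translation along the axis of $\gamma$.

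\textbf{The gap is the faithfulness mechanism.} Taking $c$ large and running a Klein--Maskit ping-pong cannot work.
\begin{enumerate}
\item For finite-volume $M$ one has $\Lambda_\Gamma=G/P$, and no nonempty open $U_2$ is precisely invariant under $\Delta$ in $\Gamma$. Pick a loxodromic $g\in\Gamma$ with $g^+\in U_2\smallsetminus\{\gamma^\pm\}$; then $g\notin\Delta$ and $g^+\in gU_2\cap U_2$. Separately, your claim that $\gamma^\pm$ are isolated in $\Lambda_\Gamma$ is false for nonelementary $\Gamma$, since $\Lambda_\Gamma$ is perfect.
\item More robustly, let $\Gamma$ be a lattice and $c\in G$ centralize $\Delta$. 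If $\langle\Gamma,c\Gamma c^{-1}\rangle$ is discrete, then $\Gamma\cap c\Gamma c^{-1}$ has finite index in $\Gamma$. Take $g=cg'c^{-1}$ in it with $g\notin\Delta$, so $g'\notin\Delta$. Then the reduced word $g\,(g')^{-1}$ of length two is killed. So every faithful representation of your form is non-discrete, whereas Maskit-type combination produces discrete groups.
\item Largeness of $c$ is not the relevant condition. Let $\Gamma<\mathsf{SL}_2(\mathbb{Z})$ be torsion-free of finite index and $\gamma\in\Gamma$ primitive hyperbolic. The matrices $aI+b\gamma$ with $a,b\in\mathbb{Q}$ and positive determinant, rescaled to determinant one, centralize $\gamma$ and commensurate $\Gamma$. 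They also have arbitrarily large translation length, and the relation above kills the representation. This relation holds in $G$, so your fallback of a higher-dimensional representation does not help as long as $c$ comes from $G$.
\end{enumerate}

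\textbf{What is missing} is genericity in an algebraic sense, together with a non-archimedean substitute for ping-pong in the spirit of Shalen.
\begin{itemize}
\item The paper takes $t\in\mathbb{R}$ transcendental over the countable field $K$ generated by the entries of $\Gamma$.
\item It calls a matrix ``sufficiently transcendental'' if, for some $q\geq 0$, its first $n-1$ columns have $t$-degree at most $q$, its last column has $t$-degree at most $q+1$, and the $t^{q+1}$-coefficient of the bottom-right entry is nonzero.
\item This property is preserved under products and under adding matrices over $K$, and it forces nonzero powers. Applied to $M-I$, this shows such $M$ are never unipotent.
\item Each product $g_1\,a_tg_2a_t^{-1}$ with $g_1,g_2\in\Gamma\smallsetminus\Delta$ is sufficiently transcendental with $q=1$. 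The key coefficient is the bottom-left entry of $g_1$ times the top-right entry of $g_2$.
\item Hence every cyclically reduced word of positive even length is non-unipotent, in particular nontrivial.
\end{itemize}
The only geometric input is pointwise. For $g\in\Gamma\smallsetminus\Delta$ one has $g\gamma^\pm\neq\gamma^\pm$, because $\Delta=\mathrm{Stab}_\Gamma(\gamma^+)=\mathrm{Stab}_\Gamma(\gamma^-)$. Transversality of $\Lambda_\Gamma$ then makes those corner entries nonzero. No uniform control of $\Gamma\smallsetminus\Delta$ near $\gamma^\pm$ is needed; the obstacle you identified is an artifact of the archimedean approach.
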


The negative curvature assumption on $M$ in Theorem~\ref{lss} is indispensable. Indeed, as shown in~\cite{TT-IMRN}, it follows from Margulis's superrigidity theorem \cite{Mar} and work of Prasad--Rapinchuk \cite{MR1960120} that if $M$ is instead taken to be an irreducible finite-volume quotient of a {\em higher-rank} symmetric space of noncompact type, then the fundamental group of the double of $M$ along a generic primitive closed geodesic will fail to admit a faithful finite-dimensional linear representation over any field.
On the other hand, as already noted in~\cite[\S3]{MR2100678} in the arithmetic case, for any locally symmetric space $M$ of noncompact type with finitely generated fundamental group, the fundamental group of the double of $M$ along a closed geodesic, primitive or otherwise, is at the very least residually finite; see~\cite{arXiv:2504.19995} and the references~\cite{MR1769939, MR44570, zbMATH03678091} therein.

In the case that $M$ has {\em constant} negative curvature, i.e., the case that $M$ is real hyperbolic, Theorem~\ref{lss} follows from~\cite[Thm.~1.7]{arXiv:2603.23969}.\footnote{Theorem 1.7 in~\cite{arXiv:2603.23969} ceases to apply when $M$ is of variable curvature; indeed, any irreducible symmetric space containing a one-dimensional reflective submanifold must also contain a totally geodesic hypersurface and must thus be of constant curvature \cite{zbMATH03236701, zbMATH03719950}.} In this setting, it moreover follows from work of Baker and Cooper~\cite{MR2417445} that if $c$ is a primitive closed geodesic in~$M$ with trivial holonomy, then there is a finite cover $\hat{M}$ of $M$ to which $c$ lifts isometrically and such that the double of $\hat{M}$ along $c$ is homotopy equivalent to a real hyperbolic manifold whose dimension depends only on that of~$M$. 

Note however that, since the octonionic hyperbolic plane does not embed as a positive-codimension totally geodesic submanifold of any negatively curved symmetric space~\cite{zbMATH03186462}, it follows from the superrigidity theorem of Corlette~\cite{Corlette} that the double of a finite-volume octonionic hyperbolic surface along a properly immersed totally geodesic submanifold (of positive codimension) will fail to be homotopy equivalent to any negatively curved locally symmetric space. It was shown in~\cite{TT-IMRN} that the latter also holds for doubles of compact quaternionic hyperbolic $(\geq 3)$-folds along closed geodesics. By a similar argument, the same will hold for the double of a finite-volume quaternionic hyperbolic $(\geq3)$-fold along a closed geodesic $c$ if $c$ is taken within a properly immersed totally geodesic quaternionic curve.\footnote{In light of Gromov--Schoen's arithmeticity theorem \cite{GS}, such a quaternionic curve is always available; see~\cite[Prop.~4.1]{zbMATH07729896}.}

On the other hand, by the combination theorems of Bestvina--Feighn~\cite{BF} and Dahmani~\cite{zbMATH02057405}, the fundamental group of any double $D$ of a locally symmetric space~$M$ as in the previous paragraph along a primitive closed geodesic $c$ will nevertheless be intrinsically negatively curved, in the sense that such $\pi_1(D)$ are hyperbolic relative to the fundamental groups of the ends of~$D$. If $c$ is moreover chosen so that the (pushforward of the) normalized length measure on $c$ closely approximates the normalized volume measure on $M$, then $\pi_1(D)$ is thus perhaps a good candidate for a linear group that is hyperbolic relative to finitely generated nilpotent subgroups but admits no discrete embedding in any connected Lie group. Note however that, by work of the second-named author with Dey~\cite{arXiv:2504.21802}, in the case that $M$ is compact, up to replacing $M$ with a finite cover, one can always find a representation of~$\pi_1(D)$ that is {\em Anosov} in the sense of Labourie~\cite{MR2221137} and Guichard--Wienhard~\cite{MR2981818}, and is in particular discrete and faithful. As suggested to the second-named author by Hee~Oh, more generally, it seems likely that if $M$ is only assumed to be of finite volume then, up to replacing $M$ with a finite cover, one can always find a representation of $\pi_1(D)$ that is, as defined in \cite{KL-relative} and \cite{Zhu-dominated}, {\em Anosov relative to} the fundamental groups of the ends of $D$.

Theorem~\ref{lss} was previously established in work of the second-named author with Tholozan~\cite{TT-IMRN} in the case that $M$ is convex cocompact (e.g., the case that $M$ is compact). There, the statement followed from a more general linearity result for doubles of torsion-free Anosov subgroups of semisimple Lie groups along maximal cyclic subgroups. The approach here is similar, but Theorem~\ref{lss} will instead follow from a linearity result (Theorem~\ref{doubletransverse}) for doubles of {\em transverse subgroups} in the language of Canary--Zhang--Zimmer~\cite{CZZ}.

To define the latter notion, we introduce some terminology/notation. 
For $n\geq 2$, let~$\mathcal{F}_{1,n-1}$ be the space of all flags in $\mathbb{R}^n$ of the form $(V,W)$, where $V$ and $W$ are linear subspaces of $\mathbb{R}^n$ of dimension $1$ and $n-1$, respectively (for $n=2$, one can understand $\mathcal{F}_{1,n-1}$ simply as $\mathbb{RP}^1$). Two flags $(V,W), (V',W') \in \mathcal{F}_{1,n-1}$ are {\em transverse} if both $V+W' = \mathbb{R}^n$ and $V'+W = \mathbb{R}^n$. A subset $\Lambda \subset \mathcal{F}_{1,n-1}$ is then {\em transverse} if any two distinct elements of $\Lambda$ are transverse.
A sequence $(\gamma_m)_{m \in \mathbb{N}}$ in~$\Gamma$ is {\em \textup{(}$P_{1,n-1}$-\textup{)}contracting} if there are flags $\phi^+, \phi^- \in \mathcal{F}_{1,n-1}$, the {\em attracting} and {\em repelling} flags of $(\gamma_m)_m$, respectively, such that $\gamma_m$ converges to the constant function~$\phi^+$ uniformly on compact subsets of $\mathcal{F}_{1,n-1}$ consisting entirely of flags transverse to~$\phi^-$. A subgroup $\Gamma$ of $\mathsf{SL}_n(\mathbb{R})$ is said to be {\em \textup{(}$P_1$-\textup{)}divergent} if each infinite sequence of distinct elements of $\Gamma$ contains a contracting subsequence. The {\em limit set} $\Lambda_\Gamma$ of such~$\Gamma$ in $\mathcal{F}_{1,n-1}$ is the closed $\Gamma$-invariant subset of $\mathcal{F}_{1,n-1}$ consisting  of all attracting (equivalently, repelling) flags of contracting sequences in $\Gamma$. Note that a divergent subgroup of $\mathsf{SL}_n(\mathbb{R})$ is in particular discrete, but that, as soon as $n \geq 3$, discrete subgroups of $\mathsf{SL}_n(\mathbb{R})$ need not be divergent (for example, if $n \geq 3$, then no lattice in a Cartan subgroup of, and hence by~\cite{MR302822} no lattice in, $\mathsf{SL}_n(\mathbb{R})$ is divergent). One says a divergent subgroup $\Gamma$ of $\mathsf{SL}_n(\mathbb{R})$ is {\em \textup{(}$P_1$-\textup{)}transverse} if $\Lambda_\Gamma$ is transverse. In the latter case, the action of $\Gamma$ on $\Lambda_\Gamma$ is a convergence action in the sense of Gehring and Martin \cite{GM}.\footnote{In retrospect, the notion of a (nonelementary) convergence action was introduced earlier by Furstenberg~\cite{zbMATH03293802}; see~\cite{zbMATH05166054}.} An element $\gamma \in \mathsf{SL}_n(\mathbb{R})$ generating a transverse cyclic subgroup whose limit set consists of precisely two flags $\gamma^\pm \in \mathcal{F}_{1,n-1}$ is said to be {\em biproximal}. In the language of convergence actions, an infinite-order element $\gamma$ of a transverse subgroup $\Gamma < \mathsf{SL}_n(\mathbb{R})$ is biproximal if and only if $\gamma$ acts loxodromically on $\Lambda_\Gamma$. 

The main result of this note is the following.

\begin{theorem}\label{doubletransverse}
Suppose $\Gamma < \mathsf{SL}_n(\mathbb{R})$ is transverse and $\gamma \in \Gamma$ is biproximal. Let $\Delta = \mathrm{Stab}_\Gamma(\gamma^+)$. Then the double $\Gamma \ast_\Delta \Gamma$ embeds in $\mathsf{SL}_n(\mathbb{R})$. 
\end{theorem}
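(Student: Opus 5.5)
The plan is to realize the double as the group $\langle \Gamma, g\Gamma g^{-1}\rangle$ for a suitable $g \in \mathsf{SL}_n(\mathbb{R})$ centralizing $\Delta$, and to verify faithfulness with a ping-pong argument on $\mathcal{F}_{1,n-1}$. First I will describe $\Delta$. Since $\Gamma$ acts on $\Lambda_\Gamma$ as a convergence group and $\gamma$ is loxodromic, $\mathrm{Stab}_\Gamma(\gamma^+)$ also fixes $\gamma^-$ and is virtually cyclic. In fact the convergence property forces it to be $\langle \gamma_0\rangle \times F$, with $\gamma_0$ a root of $\gamma$ and $F$ finite. Biproximality gives a splitting $\mathbb{R}^n = \gamma^+_1 \oplus (\gamma^+_{n-1}\cap \gamma^-_{n-1}) \oplus \gamma^-_1$ that is preserved by $\Delta$.

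Next I take $g_t$ to be the one-parameter group acting by $e^{t}$ on $\gamma_1^+$, by $e^{-t}$ on $\gamma_1^-$, and trivially on the middle summand $U=\gamma^+_{n-1}\cap\gamma^-_{n-1}$. This $g_t$ commutes with every element preserving the splitting and acting on $\gamma^\pm_1$ by scalars. Every element of $\Delta$ does so, because it preserves the lines $\gamma^\pm_1$ and the hyperplanes $\gamma^{\pm}_{n-1}$. Hence $g_t$ centralizes $\Delta$, and the natural map $\rho_t:\Gamma\ast_\Delta\Gamma\to \mathsf{SL}_n(\mathbb{R})$, given by the identity on the first factor and conjugation by $g_t$ on the second, is well defined. (If $\Delta$ acts on $\gamma^\pm_1$ with signs, $g_t$ still commutes with it.)

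To prove $\rho_t$ is injective for large $t$, I will run the ping-pong / Maskit combination lemma for amalgams. By the normal form theorem for amalgams, it suffices to find disjoint subsets $A,B\subset\mathcal{F}_{1,n-1}$ such that $(\Gamma\smallsetminus\Delta)A\subset B$ and $g_t(\Gamma\smallsetminus\Delta)g_t^{-1}B\subset A$, together with a point outside both whose images detect nontriviality. Here $A$ and $B$ are $\Delta$-invariant. The natural choice is to let $B$ be a small $\Delta$-invariant neighborhood of $\Lambda_\Gamma\smallsetminus\{\gamma^\pm\}$ away from $\gamma^\pm$, and $A$ a neighborhood of the region "near $\gamma^+$ or $\gamma^-$" in directions transverse to $\Lambda_\Gamma$. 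The flow $g_t$ pushes everything transverse to $\gamma^-$ toward $\gamma^+$ along $\gamma_1$. So for large $t$, $g_t B$ lies in a thin neighborhood of $\{\gamma^+,\gamma^-\}$ intersected with $\Delta$-fundamental domain data, while $g_t^{-1}$ does the reverse.

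The key input is the dynamics of the elements of $\Gamma\smallsetminus\Delta$. By divergence and transversality, for any sequence of distinct elements $\eta_m\in\Gamma$ leaving every coset $\Delta\eta\Delta$, a subsequence contracts toward some point of $\Lambda_\Gamma$ not equal to $\gamma^\pm$ in the quotient. The limit set of $\Delta$ is $\{\gamma^\pm\}$, and the double cosets $\Delta\backslash\Gamma/\Delta$ accumulate only in the appropriate way. I would use a $\Delta$-cocompactness argument: $\Delta$ acts cocompactly on $\Lambda_\Gamma\smallsetminus\{\gamma^\pm\}$'s complement in a suitable sense, so it suffices to control finitely many double coset representatives plus the contracting tails.

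The main obstacle I expect is constructing the ping-pong sets in the non-cocompact, merely transverse setting. Without Anosov uniformity, one cannot use a quasi-geodesic/Morse argument, and must instead extract uniform contraction purely from divergence plus transversality. Concretely, the step I would attack first is to show that for every compact set $K$ of flags transverse to both $\gamma^\pm$, all but finitely many $\Delta$-double cosets map $K$ into a small neighborhood of $\Lambda_\Gamma$, using a sequential compactness argument on contracting subsequences. I would then choose $t$ large enough to handle the finitely many remaining double cosets directly, using that their elements do not fix $\gamma^\pm$.
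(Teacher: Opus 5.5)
The first half of your plan matches the paper. Every element of $\Delta$ is block diagonal with respect to $\gamma^+_1\oplus U\oplus\gamma^-_1$, so $g_t$ centralizes $\Delta$ and $\rho_t$ is well defined; your $g_t$ is the paper's $a_{e^t}$. The gap is the faithfulness step, which fails as stated, in two ways.

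\textbf{Injectivity fails for arbitrarily large $t$.} Let $\Gamma$ be a non-elementary discrete subgroup of $\mathsf{SO}_0(2,1)<\mathsf{SL}_3(\mathbb{R})$ and $\gamma\in\Gamma$ hyperbolic. Such $\Gamma$ is transverse, with $\Lambda_\Gamma$ contained in the circle $C$ of flags $(\ell,\ell^\perp)$ with $\ell$ isotropic. In your adapted basis $\gamma=\mathrm{diag}(\lambda,1,\lambda^{-1})=g_{\log\lambda}$ with $\lambda>1$. Hence $\rho_{t+\log\lambda}=\rho_t\circ\tau$, where $\tau$ is the Dehn twist: the identity on the first factor and conjugation by $\gamma$ on the second. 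This is well defined because $\Delta$ is cyclic here. In particular $\rho_{k\log\lambda}$ has image $\Gamma$ for every integer $k$. It kills the reduced word $\eta\cdot(\gamma^{-k}\eta^{-1}\gamma^{k})$ for $\eta\in\Gamma\smallsetminus\Delta$, with the second letter taken in the second factor.

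The underlying reason is that $g_t$ commutes with $\Delta$ while your sets are $\Delta$-invariant. So $g_t$ acts on them only through $\Delta$-quotients, where it may be a rotation periodic in $t$ rather than a contraction. In particular $g_tB$ is itself $\Delta$-invariant, so it cannot lie in a thin neighborhood of $\{\gamma^\pm\}$.

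\textbf{Your sets cannot exist for any $t$ when $\Gamma$ is a lattice in $\mathsf{SO}_0(2,1)$.} Then $\Lambda_\Gamma=C$, and $h=g_t\eta g_t^{-1}\in\mathsf{SO}_0(2,1)$ preserves $C$ for $\eta\in\Gamma\smallsetminus\Delta$. So $hB\supset C\smallsetminus\{h\gamma^\pm\}$ meets $B\supset C\smallsetminus\{\gamma^\pm\}$. This contradicts $hB\subset A$ together with $A\cap B=\emptyset$. This is unavoidable. For a torsion-free cocompact lattice, the double has $H_2$ of rank at least $2$, so it is neither free nor a surface group. Hence every faithful image of the double in $\mathsf{SO}_0(2,1)$ is indiscrete. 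The uniform-contraction and double-coset steps you planned to attack cannot repair this.

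\textbf{The missing idea: choose $t$ generic, not large, and prove faithfulness algebraically.} The paper conjugates so that $\gamma^+=(e_1,\langle e_1,\dots,e_{n-1}\rangle)$ and $\gamma^-=(e_n,\langle e_2,\dots,e_n\rangle)$. The entry field $K$ of the countable group $\Gamma$ is countable, so one can take $t\in\mathbb{R}$ transcendental over $K$ and conjugate the second factor by $a_t=\mathrm{diag}(t,I_{n-2},t^{-1})$.

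The only dynamical input needed is that every $g\in\Gamma\smallsetminus\Delta$ has nonzero $(n,1)$ and $(1,n)$ entries. This is immediate from transversality: $g\gamma^+\neq\gamma^+$ forces $g\gamma^+$ to be transverse to $\gamma^+$, i.e.\ $ge_1\notin\langle e_1,\dots,e_{n-1}\rangle$, and symmetrically for $\gamma^-$.

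A Shalen-type leading-term argument then finishes. For $g_1,g_2\in\Gamma\smallsetminus\Delta$, the entries of $g_1a_tg_2a_t^{-1}$ are Laurent polynomials in $t$ of degree at most $1$ off the last column and at most $2$ in the last column. The $(n,n)$ entry has degree exactly $2$, with leading coefficient $(g_1)_{n1}(g_2)_{1n}\neq0$. This ``sufficiently transcendental'' shape, with $(1,2)$ replaced by $(q,q+1)$, is stable under products and under adding matrices over $K$. So no alternating word is unipotent, let alone trivial.

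The relevant contraction is thus measured by the degree in $t$, not in the real topology. No uniformity is needed, which is why the argument works for merely transverse $\Gamma$.
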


\begin{remark}
In the context of Theorem \ref{doubletransverse}, we also have $\Delta = \mathrm{Stab}_\Gamma(\gamma^-)$, and $\Delta$ is virtually cyclic. In particular, if $\Gamma$ is moreover torsion-free, then $\Delta$ is nothing but the maximal cyclic subgroup of $\Gamma$ containing $\gamma$. 
\end{remark}

The above notions of transversality, contraction, divergence, limit set, and biproximality can be (and have been) extended to general reflexive flag manifolds $G/P$ of semisimple  Lie groups $G$; see Kapovich--Leeb--Porti~\cite{KLP}, whose work precedes that of Canary--Zhang--Zimmer~\cite{CZZ} (though we adopt here the terminology of the latter), and who instead refer to such subgroups as {\em regular antipodal}. Examples of $P$-transverse subgroups of such $G$ include the relative~$P$-Anosov subgroups~\cite{KL-relative, Zhu-dominated}. One can realize $P$-transverse subgroups of $G$ for general pairs $(G,P)$ as transverse subgroups of special linear groups via the following fact.

\begin{proposition}\label{plucker}\textup{(}\cite[Prop.~B.1]{CZZ}\textup{)}
Let $G$ be a noncompact semisimple real algebraic group and $P < G$ a reflexive proper parabolic subgroup (considered up to conjugacy). Then there exist $n = n(G,P) \in \mathbb{N}$, a representation $\tau: G \rightarrow \mathsf{SL}_n(\mathbb{R})$, and a $\tau$-equivariant smooth embedding $\xi: G/P \rightarrow \mathcal{F}_{1,n-1}$, such that a given subgroup $\Gamma < G$ is $P$-transverse if and only if $\tau(\Gamma)$ is transverse in $\mathsf{SL}_n(\mathbb{R})$, and in the latter case, one has $\Lambda_{\tau(\Gamma)} = \xi(\Lambda_{\Gamma})$, where $\Lambda_{\Gamma}$ denotes the limit set of $\Gamma$ in $G/P$. In particular, an element $g \in G$ is $P$-biproximal if and only if $\tau(g)$ is biproximal.
\end{proposition}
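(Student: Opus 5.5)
The plan is to construct $\tau$ as an irreducible representation whose highest weight is adapted to $P$, in the style of the Plücker embedding, and then compare Cartan projections and Bruhat cells on the two sides. Fix a Cartan subspace $\mathfrak a$, a closed Weyl chamber $\mathfrak a^+$ and the simple roots $\Sigma$. Let $\theta \subset \Sigma$ be the set of simple roots $\alpha$ whose negative root space is not contained in the Lie algebra of $P$, so that $P = P_\theta$ is a standard parabolic. Reflexivity means $\iota(\theta) = \theta$, where $\iota = -w_0$ is the opposition involution. Choose a dominant weight $\chi = \sum_{\alpha \in \theta} k_\alpha \omega_\alpha$ with all $k_\alpha > 0$, taking multiples large enough that $\chi$ is the highest weight of an irreducible algebraic representation $\tau : G \to \mathsf{SL}_n(\mathbb{R})$ with one-dimensional highest weight space; such multiples exist by Tits' classification of real representations. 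Let $V_+$ be the highest weight line and $W_+$ the hyperplane spanned by all weight spaces other than the lowest one, whose weight is $w_0\chi$.

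The first step is to compute stabilizers. $\mathrm{Stab}_G(V_+)$ is the parabolic determined by the simple roots $\alpha$ with $\langle \chi, \alpha \rangle \neq 0$, which is exactly $P_\theta$. Positive root groups can only raise weights, so they preserve $W_+$. Hence $\mathrm{Stab}_G(W_+)$ is a standard parabolic. Dually it corresponds to the support of $-w_0\chi = \iota\chi$, namely $\iota(\theta) = \theta$. This is the one place reflexivity enters: it guarantees $\mathrm{Stab}_G(V_+) = \mathrm{Stab}_G(W_+) = P$. Setting $\xi(gP) = (\tau(g)V_+, \tau(g)W_+)$ then gives a well-defined, $\tau$-equivariant, smooth injective map. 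It is an immersion, hence an embedding by compactness, because it is an orbit map with stabilizer exactly $P$.

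The second step is to show that $\xi$ preserves and reflects transversality. By equivariance, it suffices to compare $P$ with $gP$ for $g$ in a Bruhat cell $PwP$. One must check that $\tau(g)V_+ + W_+ = \mathbb{R}^n$ exactly when $w$ lies in the longest class $w_0 W_\theta$, i.e.\ exactly when $gP$ is opposite to $P$. This amounts to saying the highest weight vector has nonzero component along the lowest weight line after applying $w$ iff $w\chi = w_0\chi$, which holds iff $w \in w_0 W_\theta$. The symmetric condition is handled the same way.

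The third step, which I expect to be the main technical point, is matching contraction. Write $g = k\exp(\kappa(g))\ell$ in Cartan decomposition, and take $\tau$ with $\tau(K)$ orthogonal for a suitable inner product. The singular values of $\tau(g)$ are then $e^{\lambda(\kappa(g))}$, where $\lambda$ ranges over the weights. The top value is $e^{\chi(\kappa(g))}$. The next weights below $\chi$ are $\chi - \alpha$ for simple $\alpha$ with $\langle \chi,\alpha\rangle \neq 0$, i.e.\ for $\alpha \in \theta$, so
\[ \log\frac{\sigma_1}{\sigma_2}(\tau(g)) = \min_{\alpha\in\theta}\alpha(\kappa(g)). \]
Consequently a sequence $g_m$ is $P$-contracting in $G$ iff $\tau(g_m)$ is $P_{1,n-1}$-contracting. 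Moreover the attracting flag of $\tau(g_m)$ is the limit of $(\tau(k_m)V_+, \tau(k_m)W_+) = \xi(k_mP)$, and symmetrically for repelling flags. This gives that divergence of $\Gamma$ is equivalent to divergence of $\tau(\Gamma)$, with $\Lambda_{\tau(\Gamma)} = \xi(\Lambda_\Gamma)$. Combined with the second step, transversality of the two limit sets corresponds. Applying this to cyclic groups $\langle g\rangle$ yields the statement about biproximal elements.
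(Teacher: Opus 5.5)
The paper gives no proof of this proposition; it cites \cite[Prop.~B.1]{CZZ} and identifies $\tau$ as a Pl\"ucker--Tits representation as in \cite[Prop.~3.3 and Lem.~3.7]{GGKW}. Your argument is exactly that construction: a $\theta$-proximal highest weight module, the stabilizer computations for $V_+$ and $W_+$, the Bruhat-cell check of transversality, and the identity $\log\frac{\sigma_1}{\sigma_2}(\tau(g))=\min_{\alpha\in\theta}\alpha(\kappa(g))$. Those parts are sound. One small addition: contraction in $\mathcal{F}_{1,n-1}$ also needs $\frac{\sigma_{n-1}}{\sigma_n}(\tau(g_m))\to\infty$. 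That gap equals $\min_{\alpha\in\iota(\theta)}\alpha(\kappa(g_m))$, so reflexivity is used here too, not only for $\mathrm{Stab}_G(W_+)$.

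There is a genuine gap where you assert that divergence of $\Gamma$ is \emph{equivalent} to divergence of $\tau(\Gamma)$. Going from $\tau(\Gamma)$ back to $\Gamma$ means turning a sequence of distinct elements of $\Gamma$ into a sequence of distinct elements of $\tau(\Gamma)$. This requires $\Gamma\cap\ker\tau$ to be finite, and your construction does not guarantee it. Since $\chi$ is supported on $\theta$, $\tau$ kills (the identity component of) every simple factor of $G$ whose restricted simple roots avoid $\theta$.

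Here is a concrete failure. Take $G=\mathsf{SL}_2(\mathbb{R})\times\mathsf{SL}_2(\mathbb{R})$ and $P=B\times\mathsf{SL}_2(\mathbb{R})$, with $B$ upper triangular; this $P$ is proper and reflexive. Your $\tau$ factors through the first factor. The subgroup $\Gamma=\{1\}\times\mathsf{SL}_2(\mathbb{Z})$ acts trivially on $G/P\cong\mathbb{RP}^1$, so it is not $P$-divergent. Yet $\tau(\Gamma)=\{1\}$ is vacuously transverse. So for this $\tau$ the subgroup-level equivalence fails as literally stated.

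What your argument actually proves is that $\Gamma$ is $P$-transverse if and only if $\tau(\Gamma)$ is transverse and $\Gamma\cap\ker\tau$ is finite. The second condition is automatic when $\ker\tau$ is finite, e.g.\ for $G$ simple, which covers Theorem~\ref{lss}.

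The forward direction is unaffected. Your gap formula shows every $g\in\ker\tau$ has $\min_{\alpha\in\theta}\alpha(\kappa(g))=0$, so a $P$-divergent $\Gamma$ meets $\ker\tau$ in a finite set. For torsion-free $\Gamma$ this gives injectivity of $\tau|_\Gamma$, which is what Corollary~\ref{doublePtransverse} needs. The biproximality equivalence also survives: if $\tau(g)$ has infinite order, then $\tau$ is injective on $\langle g\rangle$.
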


The representation $\tau$ in Proposition~\ref{plucker} is nothing but a so-called Pl\"ucker--Tits representation of~$G$; see, for instance,~\cite[Prop. 3.3~and~Lem. 3.7]{GGKW}. The following is then immediate from Theorem~\ref{doubletransverse} and Proposition~\ref{plucker}.

\begin{corollary}\label{doublePtransverse}
Let $G$ be a noncompact semisimple real algebraic group and $P < G$ a reflexive proper parabolic subgroup (considered up to conjugacy). Then there exists $n = n(G,P) \in \mathbb{N}$ such that, for any torsion-free $P$-transverse subgroup $\Gamma < G$ and any maximal cyclic subgroup $\Delta < \Gamma$ generated by a $P$-biproximal element of $G$, the double $\Gamma \ast_\Delta \Gamma$ embeds in $\mathsf{SL}_n(\mathbb{R})$.
\end{corollary}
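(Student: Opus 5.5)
The plan is to deduce Corollary~\ref{doublePtransverse} directly from Theorem~\ref{doubletransverse} by means of Proposition~\ref{plucker}. Take $n = n(G,P)$, $\tau\colon G \to \mathsf{SL}_n(\mathbb{R})$ and $\xi$ as provided by Proposition~\ref{plucker}. Note that $n$ depends only on $(G,P)$. Let $\Gamma < G$ be torsion-free and $P$-transverse, and let $\Delta = \langle g \rangle < \Gamma$ be maximal cyclic with $g$ $P$-biproximal. By Proposition~\ref{plucker}, the group $\tau(\Gamma)$ is transverse in $\mathsf{SL}_n(\mathbb{R})$, and $\tau(g)$ is biproximal.

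\textbf{Step 1: $\tau|_\Gamma$ is injective.} The kernel of $\tau$ is a normal algebraic subgroup of $G$ that cannot contain any noncompact simple factor: if it did, every element of that factor would act trivially on $\xi(G/P)$, which is incompatible with the properties in Proposition~\ref{plucker}. Hence $\ker\tau$ is compact. Since $\Gamma$ is $P$-transverse, it is discrete, so $\Gamma \cap \ker\tau$ is finite. As $\Gamma$ is torsion-free, $\Gamma \cap \ker\tau$ is trivial. Consequently $\tau|_\Gamma$ is an isomorphism onto $\tau(\Gamma)$, and the same holds for $\tau(\Delta)$.

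\textbf{Step 2: identify the stabilizer.} Set $\Delta' = \mathrm{Stab}_{\tau(\Gamma)}(\tau(g)^+)$. The group $\tau(\Gamma) \cong \Gamma$ is torsion-free, so by the Remark following Theorem~\ref{doubletransverse}, $\Delta'$ is the maximal cyclic subgroup of $\tau(\Gamma)$ containing $\tau(g)$. Because $\tau|_\Gamma$ is an isomorphism, maximal cyclic subgroups correspond under it. Since $\Delta$ is maximal cyclic and contains $g$, this gives $\Delta' = \tau(\Delta)$.

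\textbf{Step 3: conclude.} Theorem~\ref{doubletransverse} says that $\tau(\Gamma) \ast_{\tau(\Delta)} \tau(\Gamma)$ embeds in $\mathsf{SL}_n(\mathbb{R})$. The isomorphism $\tau|_\Gamma$ carries $\Delta$ onto $\tau(\Delta)$, so it induces an isomorphism
\[
\Gamma \ast_\Delta \Gamma \cong \tau(\Gamma) \ast_{\tau(\Delta)} \tau(\Gamma).
\]
Hence $\Gamma \ast_\Delta \Gamma$ embeds in $\mathsf{SL}_n(\mathbb{R})$.

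The only point needing care is Step~1, namely that $\ker\tau$ is compact, so that $\tau$ is faithful on discrete torsion-free subgroups. If the compactness argument above is unconvincing, an alternative is to use that Plücker--Tits representations have kernel contained in the product of the center with the compact factors of $G$. The rest of the argument is formal.
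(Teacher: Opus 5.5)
Your proposal is correct and follows the paper's route: the paper simply records the corollary as immediate from Theorem~\ref{doubletransverse} and Proposition~\ref{plucker}, and you make explicit the two points it leaves implicit, namely injectivity of $\tau|_\Gamma$ and the identification $\mathrm{Stab}_{\tau(\Gamma)}(\tau(g)^+)=\tau(\Delta)$. One caveat on Step~1: you do not need compactness of $\ker\tau$, and it in fact fails for the Pl\"ucker--Tits representation when $P$ contains a noncompact simple factor of $G$ (the same is true of your fallback claim about Pl\"ucker--Tits kernels); it suffices that $\Gamma\cap\ker\tau$ fixes $G/P$ pointwise, since $\xi$ is injective and $\tau$-equivariant, so $P$-divergence forces $\Gamma\cap\ker\tau$ to be finite, hence trivial because $\Gamma$ is torsion-free.
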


Note that, since $G$ contains only finitely many conjugacy classes of proper parabolic subgroups, it follows formally that the dimension $n$ can in fact be made uniform in $P$.

If $G$ is a simple Lie group of real rank one and $P$ is any proper parabolic subgroup of $G$, then $G/P$ can be identified, as a homogeneous space for $G$, with the visual boundary of the symmetric space for $G$. In this setting, to say two elements of $G/P$ are transverse is merely to say that they are distinct; a $P$-transverse subgroup~$\Gamma$ of $G$ is nothing but a discrete subgroup of $G$; and the above notion of limit set in~$G/P$ coincides with the classical one. Theorem~\ref{lss} then follows immediately from Corollary~\ref{doublePtransverse}.

Theorem~\ref{doubletransverse} will in turn be deduced from the following statement, which is inspired by techniques of Shalen~\cite{Shalen}.

\begin{theorem}\label{amalgam}
Let $\Gamma_1, \Gamma_2 < \mathsf{SL}_n(K)$ and $\Delta < \Gamma_1 \cap \Gamma_2$, where $K$ is any field. Suppose there is an integer $1 \leq m \leq \frac{n}{2}$ such that for each $h \in \Delta$, we have that $h$ is of the form
\[h= \begin{pmatrix} A_h & & \\ & B_h & \\ & & C_h \end{pmatrix}
\]
for some $A_h, C_h \in \mathsf{GL}_m(K)$ and $B_h \in \mathsf{GL}_{n-2m}(K)$. Suppose moreover that for each $g_1 \in \Gamma_1 \smallsetminus \Delta$ the bottom-left $(m \times m)$-block of $g_1$ is invertible, and for each $g_2 \in \Gamma_2 \smallsetminus \Delta$, the top-right $(m \times m)$-block of $g_2$ is invertible. If $L$ is a field extension of $K$ and $t \in L$ is transcendental over $K$, then the subgroup $\langle \Gamma_1, a_t \Gamma_2 a_t^{-1} \rangle < \mathsf{SL}_n(L)$ decomposes as the amalgam $\Gamma_1 *_\Delta a_t \Gamma_2 a_t^{-1}$, where
\[
a_t := \begin{pmatrix} tI_m & & \\ & I_{n-2m} & \\ & & t^{-1}I_m \end{pmatrix}.
\]
\end{theorem}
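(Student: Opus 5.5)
We prove this by the normal form theorem for amalgams. Since $a_t$ commutes with every $h \in \Delta$ (both are block diagonal with the same block structure, and $a_t$ is scalar on each block), conjugation by $a_t$ fixes $\Delta$ pointwise. Hence $\Delta < \Gamma_1 \cap a_t\Gamma_2 a_t^{-1}$, and the natural map $\Gamma_1 *_\Delta a_t\Gamma_2 a_t^{-1} \to \mathsf{SL}_n(L)$ is well defined. It suffices to show that every reduced word
\[ w = g_1 a_t g_2 a_t^{-1} g_3 a_t g_4 a_t^{-1} \cdots, \]
with letters $g_{2i+1} \in \Gamma_1 \smallsetminus \Delta$ and $g_{2i} \in \Gamma_2\smallsetminus\Delta$, is nontrivial in $\mathsf{SL}_n(L)$. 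Up to conjugation we may assume the word has length at least $2$ and begins and ends in $\Gamma_1$, or else in $\Gamma_2$. A word of length one is nontrivial trivially, and by cyclic conjugation one can reduce to words of even length, i.e.\ products of terms $g_{1}a_tg_2a_t^{-1}$ with $g_1\in\Gamma_1\smallsetminus\Delta$ and $g_2\in \Gamma_2\smallsetminus\Delta$.

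Since $t$ is transcendental over $K$, we may view $w$ as a matrix whose entries are Laurent polynomials in $t$ with coefficients in $K$. The plan is a degree count, in the spirit of Shalen's ping-pong on valuations. Write each $g\in\mathsf{SL}_n(K)$ in $3\times 3$ block form with blocks indexed by sizes $(m,n-2m,m)$. Conjugating by $a_t$ multiplies block $(i,j)$ by $t^{\epsilon_i-\epsilon_j}$, where $\epsilon=(1,0,-1)$. In particular, in $a_t g_2 a_t^{-1}$ the top-right block $(1,3)$ carries the top power $t^{2}$, and all other blocks have degree strictly less than $2$ in $t$. Similarly, in $a_t^{-1}g_1a_t$ the bottom-left block $(3,1)$ carries $t^2$.

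I would then prove by induction on $k$ that the product
\[ P_k = g_1 (a_t g_2 a_t^{-1}) g_3 (a_t g_4 a_t^{-1})\cdots (a_tg_{2k}a_t^{-1}) \]
has the form $t^{2k} X_k + (\text{lower order terms in } t)$, with $X_k = Y_k\,E_{13}(M_k)$. Here $E_{13}(M)$ denotes the matrix that is zero except for $M$ in block $(1,3)$, the block $M_k$ is invertible, and $Y_k$ has invertible block $(3,1)$-column structure. Concretely, I want to track that the leading coefficient has its nonzero part in block column $3$, and that its block row $3$ entry in that column is an invertible $m\times m$ matrix. For the inductive step, multiply by $g_{2k+1}$ and then by $a_tg_{2k+2}a_t^{-1}$. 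The top-degree term of the latter is $t^2 E_{13}(\text{top-right block of } g_{2k+2})$. Multiplying the leading term $t^{2k}(\ast)$, whose block column $3$ contains the invertible block, first by $g_{2k+1}$ and then by this matrix gives a $t^{2k+2}$ coefficient. Its block $(3,3)$ entry is the product
\[ (\text{invertible}) \cdot (\text{bottom-left block of } g_{2k+1}) \cdot (\text{top-right block of } g_{2k+2}), \]
which is invertible by hypothesis.

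The main obstacle is the bookkeeping in this step: one must make sure that no other combination of terms reaches degree $2k+2$, and that the invariant is exactly the right one to propagate. Degrees add under multiplication, and only the top-degree pieces combine to the top power, so the top coefficient is the product of the top coefficients. The invariant should be formulated as ``the $t^{2k}$ coefficient of $P_k$ is zero outside block column $3$ and has invertible $(3,3)$ block.'' This needs a small adjustment at the base case, where one uses that $g_1$ has invertible bottom-left block and $g_2$ has invertible top-right block.

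Given the invariant, $P_k$ has a nonzero coefficient in degree $2k\ge 2$, so $P_k\neq I$. The reduction from general reduced words, including those starting or ending in $\Gamma_2$ or of odd length, goes by conjugating by a letter to obtain a word of the above shape. This uses only that conjugates of the identity are the identity.
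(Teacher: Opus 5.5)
Your proposal is correct and is essentially the paper's argument. Your inductive invariant is exactly the paper's notion of a ``sufficiently transcendental'' matrix: the top $t$-degree coefficient is supported in the last block column, and its bottom-right $m\times m$ block, built up from the blocks $(g_{2k+1})_{31}$ and $(g_{2k+2})_{13}$, is invertible. The paper shows this class is closed under products. The only difference is that the paper also uses stability of this class under adding constant matrices to conclude that even-length alternating products are not unipotent, whereas you conclude directly that they are $\neq I$, which is all the statement requires.
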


\begin{proof}
We show that no element $g \in \mathsf{SL}_n(L)$ of the form  
\begin{equation*}
g = g_1^{(1)}g_2^{(1)}\ldots g_1^{(r)}g_2^{(r)}
\end{equation*}
is unipotent, where $g_1^{(1)}, \ldots, g_1^{(r)} \in \Gamma_1 \smallsetminus \Delta$ and $g_2^{(1)}, \ldots, g_2^{(r)} \in a_t\Gamma_2a_t^{-1} \smallsetminus \Delta$. This will suffice since any element of $\Gamma_1 *_\Delta  \Gamma_2$, if not conjugate into one of the factors, is conjugate to an alternating product of even length.

Given a matrix $M  \in \mathsf{M}_n(L)$, we write
\[
M = \begin{pmatrix} M_{11} & M_{12} \\ M_{21} & M_{22} \end{pmatrix},
\]
where the $M_{ij}$ are matrices with entries in $L$, and $M_{22} \in \mathsf{M}_m(L)$. We say $M  \in \mathsf{M}_n(L)$ is {\it sufficiently transcendental} if there is a nonnegative integer $q$ such that
\begin{itemize}
\item $M_{11} = \ldots  + t^{-1}W_{-1} + W_0 + tW_1 + \ldots + t^q W_q$ for some $W_i \in \mathsf{M}_{n-m}(K)$;
\item $M_{12} = \ldots + t^{-1}X_{-1} + X_0 + tX_1 + \ldots + t^{q+1} X_{q+1}$  for some $X_i \in \mathsf{M}_{(n-m)\times m}(K)$;
\item $M_{21} = \ldots + t^{-1}Y_{-1} + Y_0 + tY_1 + \ldots + t^q Y_{q}$ for some $Y_i \in \mathsf{M}_{m \times (n-m)}(K)$;
\item $M_{22} = \ldots + t^{-1}Z_{-1} + Z_0 + tZ_1 + \ldots + t^{q+1} Z_{q+1}$ for some $Z_i \in \mathsf{M}_{m}(K)$ with $Z_{q+1}$ invertible.
\end{itemize}
Observe that since $t$ is transcendental over $K$, the zero matrix is not sufficiently transcendental. Note also that the sum of a sufficiently transcendental matrix with an element of $\mathsf{M}_n(K)$ is sufficiently transcendental, and that a product of sufficiently transcendental matrices is sufficiently transcendental. In particular, by our first observation, a sufficiently transcendental matrix is not nilpotent. It follows that a sufficiently transcendental matrix $M$ is not unipotent, since $M- \mathrm{Id}$ is sufficiently transcendental and therefore not nilpotent.

It is easy to verify that $g_1g_2$ is sufficiently transcendental for $g_1 \in  \Gamma_1 \smallsetminus \Delta$ and $g_2 \in a_t\Gamma_2a_t^{-1} \smallsetminus \Delta$, so that $g$ is sufficiently transcendental and therefore not unipotent by the previous discussion.
\end{proof}

\begin{proof}[Proof~of~Theorem~\ref{doubletransverse}]
Up to conjugating $\Gamma$ within $\mathsf{SL}_n(\mathbb{R})$, we have
\begin{alignat*}{2}\gamma^+ &= (e_1, \textup{span}_{\mathbb{R}}\{e_1, \ldots, e_{n-1} \}), \\ \gamma^- &= (e_n,\textup{span}_{\mathbb{R}}\{ e_2, \ldots, e_n \}),
\end{alignat*}
where $e_1, \ldots, e_n$ denote the standard basis vectors of $\mathbb{R}^n$. As $\Delta = \mathrm{Stab}_\Gamma(\gamma^-)$, we then have that any $h \in \Delta$ is of the form
\[
h = \begin{pmatrix} a & & \\ & B & \\ & & c\end{pmatrix}
\]
for some $a, c \in \mathbb{R}^*$ and $B \in \mathsf{GL}_{n-2}(\mathbb{R})$. Since $\Gamma$ is discrete in $\mathsf{SL}_n(\mathbb{R})$ and hence countable, the entry field $K$ of $\Gamma$ is countable, so that $\mathbb{R}$ is a transcendental extension of $K$. By Theorem~\ref{amalgam} (with $m=1$), it thus suffices to show that, given $g \in \Gamma \smallsetminus \Delta$, both the bottom-left and top-right entries of $g$ are nonzero. Indeed, since $g\gamma^+ \in \Lambda_\Gamma \smallsetminus \{ \gamma^+ \}$, and since~$\Lambda_\Gamma$ is transverse by assumption, we have that $\gamma^+$ and $g \gamma^+$ are transverse, and hence that $g e_1 \notin \langle e_1, \ldots, e_{n-1} \rangle$. In other words, the bottom-left entry of $g$ is nonzero. Since we also have $\Delta = \mathrm{Stab}_\Gamma(\gamma^-)$, an identical argument shows that the top-right entry of $g$ is likewise nonzero.
\end{proof}

\subsection*{Acknowledgements} This note is intended as a contribution to the proceedings of the 40th Summer Conference on Topology and Its Applications, where SD spoke about the related work~\cite{arXiv:2603.23969}. SD thanks the organizers of the Geometric Group Theory session (Ben Linowitz, Ben McReynolds, and Barry Minemyer) for the opportunity to present the latter work, and the University of Split, Croatia, for its hospitality. SD is also grateful to the Max Planck Institute for Mathematics in the Sciences, Leipzig, during a visit to which this note was drafted.

\bibliographystyle{siam}
\bibliography{biblio.bib}

\end{document}